\title{Translation lengths of outer automorphisms of finitely generated free-by-finite groups}
\author{Ioannis Papavasileiou and Mihalis Sykiotis}
\newtheorem{theorem}{Theorem}[section]
\newtheorem{lemma}[theorem]{Lemma}
\newtheorem{corollary}[theorem]{Corollary}
\theoremstyle{definition}
\newtheorem{definition}[theorem]{Definition}
\theoremstyle{remark}
\newtheorem{remark}[theorem]{Remark}
 \def\classification{\@ifnextchar [{\@xfootnotetext}%
 {\begingroup\let\protect\noexpand\xdef\@thefnmark{}
 \endgroup\@footnotetext}}
 \def\keywords{\@ifnextchar [{\@xfootnotetext}%
 {\begingroup\let\protect\noexpand\xdef\@thefnmark{}
 \endgroup\@footnotetext}} 
\begin{document}

\classification {2020 {\sl Mathematics Subject Classification}:
20E06, 20E07, 20E36, 20F65.}

\keywords {{\sl Keywords}: Translation lengths, outer automorphisms, free-by-finite, Tits alternative, virtually solvable, virtually abelian.}

\maketitle

\begin{abstract}
Bestvina, Feighn and Handel proved that every subgroup of the outer
automorphism group, $\textrm{Out}(F_n)$, of the free group of rank $n$ is either virtually finitely generated abelian or contains a nonabelian free group. In this note we consider the more general situation
of the outer automorphism group $\textrm{Out}(G)$ of a finitely generated free-by-finite group $G$.
We show that $\textrm{Out}(G)$ is translation discrete and that every subgroup of $\textrm{Out}(G)$
is either virtually finitely generated abelian or contains a nonabelian free group.
\end{abstract}

\section{Introduction}

In $1972$, J. Tits \cite{Ti} proved that every subgroup of a finitely generated linear group  either contains
a free group of rank two or is virtually solvable. Since then a group is said to satisfy the \emph{Tits alternative} if each of its subgroups either contains a free group of rank two or is virtually solvable. Showing that various groups satisfy versions of the Tits alternative (in which subgroups that are not particularly ``special" must contain a non-abelian free group) has been a driving force in geometric group theory for decades. The \emph{strong Tits alternative} for a group $G$ states that every subgroup of $G$ which does not contain a non-abelian free group must be a finite extension of an  abelian group.  In \cite{BFH00,BFH05}, Bestvina, Feighn and Handel proved that $\textrm{Out}(F_n)$ satisfies the strong Tits alternative. In \cite{BFH04}, they complemented their result by showing that every solvable subgroup of $\textrm{Out}(F_n)$ has a finitely generated free abelian subgroup of finite index.
Besides outer automorphism groups of finitely generated free groups and linear groups, a ``Tits alternative" has been shown to hold for various classes of groups, such as hyperbolic groups \cite{Gr}, mapping class groups of compact surfaces \cite{Iv}, \cite{McC}, groups acting freely and properly on a $\textrm{CAT}(0)$ cube complex \cite{SW} and outer automorphism groups of certain right-angled Artin groups \cite{CV}. 
\par

The notion of \emph{translation numbers} in a metrized group is motivated by the fact that for an isometry $g$ of a hyperbolic space $\mathbb{H}$ the infimum of displacements $\tau(g)=\inf\{d(gx,x)|x\in\mathbb{H}\}$ can be computed as $\tau(g)=\lim\limits_{n\to\infty}\frac{d(g^nx_0,x_0)}{n}$ for any choice of fixed base point $x_0$. In many geometric cases there exist a set of points which $g$ displaces minimally and $g$ \emph{translates} this set by the distance $\tau(g)$. In the literature
this notion of translation has been extended to all metric groups by replacing the
hyperbolic distance by the metric distance in the group, very often the \emph{word metric} of a finitely generated group. 

In \cite{GS}, Gersten and Short  proved a number of results about biautomatic groups using translation numbers. In \cite{Gr} Gromov proved that translation numbers corresponding to a word metric in a word hyperbolic group are rational with bounded denominator and thus discrete. The condition of \emph{translation discrete} was introduced
by Conner to study nilpotent and solvable groups acting on non-positively curved
spaces. It states that the positive translation numbers of a group $G$ are bounded
away from zero. It should be noted that translation discreteness was used by Alibegovic \cite{Al}, who proved that $\textrm{Out}(F_n)$ is translation discrete, to give an alternative short proof of the celebrated theorem of Bestvina, Feighn and Handel which says that every solvable subgroup of $\textrm{Out}(F_n)$ is finitely generated and virtually abelian.\par

In this note, we first obtain a generalization  of Alibegovic's result for outer automorphisms of finitely generated free-by-finite groups.

\begin{restatable*}{theorem}{firstThmOne}\label{ThmOne} 
    Let $G$ be a finitely generated free-by-finite group. Then $\textrm{Out}(G)$ is translation discrete and satisfies the strong Tits alternative.
 \end{restatable*}
In \cite{Con00}, Conner combining the algebraic and geometric aspects of translation discreteness proved that:
\begin{theorem}(\cite[Theorem 3.4]{Con00})
Every solvable subgroup of finite virtual cohomological dimension $m$, in a translation discrete group is a finite extension of $\mathbb{Z}^m$.
\end{theorem}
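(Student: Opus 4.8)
The plan is to recast the statement as a result about \emph{distortion}, and then let translation discreteness forbid the distortion that a genuinely non‑abelian solvable group must exhibit. Write $\tau$ for the stable translation number in the ambient translation discrete group $G$; it is homogeneous, $\tau(g^{k})=|k|\,\tau(g)$, and the operative consequence of translation discreteness that I will use is that there is a fixed $\epsilon>0$ with $\tau(g)\ge\epsilon$ for every infinite‑order $g\in G$ (equivalently, no infinite‑order element of $G$ is distorted). The first step is a transfer principle: if $h$ lies in a finitely generated subgroup $K\le G$ and is distorted in $K$ (its intrinsic stable translation number in $K$ is $0$), then, since $\lvert h^{n}\rvert_{G}\le C\,\lvert h^{n}\rvert_{K}$ for a constant $C$ depending only on the chosen generating sets, one gets $\tau(h)=0$; by translation discreteness $h$ must then have finite order. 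Thus, inside $G$, no infinite‑order element of any finitely generated subgroup can be distorted.

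Next I would carry out the algebraic reductions. Since $H$ has finite virtual cohomological dimension $m$, it contains a finite‑index torsion‑free subgroup $H_{0}$ with $\operatorname{cd}(H_{0})=m$, and it suffices to prove that $H_{0}$ is virtually $\mathbb{Z}^{m}$: then $H$ is virtually $\mathbb{Z}^{m}$, and replacing the free abelian subgroup by its normal core exhibits $H$ as a finite extension of $\mathbb{Z}^{m}$. By the structure theory of solvable groups of finite cohomological dimension, $H_{0}$ is finitely generated of finite Hirsch length with $h(H_{0})=\operatorname{cd}(H_{0})=m$. So the whole problem comes down to showing that the finitely generated torsion‑free solvable group $H_{0}$ is virtually abelian; the equality $h(H_{0})=m$ then forces the finite‑index free abelian subgroup to have rank exactly $m$.

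The heart of the argument is to prove $H_{0}$ virtually abelian, and this is where translation discreteness is used. I argue by contradiction: assume $H_{0}$ is not virtually abelian. By the Milnor--Wolf theorem, $H_{0}$ is either virtually nilpotent or of exponential growth. If $H_{0}$ has exponential growth it is not virtually nilpotent, and such a solvable group admits a subnormal abelian section on which some element acts with spectral radius greater than one, producing a nontrivial exponentially distorted element. If instead $H_{0}$ is virtually nilpotent but not virtually abelian, then a finite‑index torsion‑free nilpotent subgroup has class $c\ge 2$, and any nontrivial iterated commutator $z\in\gamma_{c}(H_{0})$ satisfies $\lvert z^{\,n^{c}}\rvert\lesssim n$ and is therefore polynomially distorted. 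In either case $H_{0}$ contains a nontrivial---hence, by torsion‑freeness, infinite‑order---distorted element, contradicting the transfer principle above. Therefore $H_{0}$ is virtually abelian, which completes the proof.

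I expect the main obstacle to be the uniform production of a distorted infinite‑order element in every non‑virtually‑abelian solvable group of finite Hirsch length: the virtually nilpotent case is the classical distortion of deep commutators, but the exponential‑growth case must be handled with more care, locating a genuinely exponentially distorted cyclic subgroup from the eigenvalue data of a conjugation action. A secondary point demanding attention is the appeal to the structure theory guaranteeing that $H_{0}$ is finitely generated with $h(H_{0})=\operatorname{cd}(H_{0})=m$, since it is precisely finite generation that makes the word‑metric distortion estimates and the final rank count legitimate.
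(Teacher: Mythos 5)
This statement is not proved in the paper at all: it is quoted verbatim from Conner \cite[Theorem 3.4]{Con00} and used as a black box, so your proposal can only be judged on its own terms and against the strategy of Conner's paper. Your guiding idea --- that translation discreteness forbids infinite-order elements whose cyclic subgroups are distorted inside some finitely generated subgroup, and that a non-virtually-abelian solvable group ought to exhibit such distortion --- is the right one, and your transfer principle is correct. But the algebraic reduction already contains a genuine gap: you assert that a torsion-free solvable group $H_0$ with $\mathrm{cd}(H_0)=m<\infty$ is finitely generated with $h(H_0)=\mathrm{cd}(H_0)=m$. This is false in general: $\mathbb{Z}[1/2]$ has cohomological dimension $2$, Hirsch length $1$, and is not finitely generated; the Gildenhuys--Strebel theorem only gives $h\le \mathrm{cd}\le h+1$, with equality $h=\mathrm{cd}$ characterizing the constructible case. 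What rescues the theorem is translation discreteness itself: if $g=g_n^{k_n}$ with the $g_n$ of infinite order and $k_n\to\infty$, then $\tau(g)=k_n\tau(g_n)\ge k_n\epsilon\to\infty$, which is absurd, so no infinite-order element of a translation discrete group is infinitely divisible; this is what excludes $\mathbb{Q}$, $\mathbb{Z}[1/p]$, etc., and ultimately yields finite generation and the rank count. Your write-up never makes this argument, so both the finite generation of $H_0$ and the identification of the rank with $m$ are unsupported.

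The second gap is the exponential-growth case, which you flag as the main obstacle but do not close --- and it is precisely where the content of the theorem lies. The implication ``finitely generated solvable of exponential growth $\Rightarrow$ contains an exponentially distorted infinite-order element'' is false: $\mathbb{Z}\wr\mathbb{Z}$ is finitely generated, metabelian, of exponential growth and not virtually nilpotent, yet every nontrivial cyclic subgroup is undistorted (an element with nonzero shift, or with nonzero lamp-sum, survives linearly in the abelianization $\mathbb{Z}^2$, and for the remaining elements $(f,0)$ one has $\|(f,0)^n\|\ge n\sum_i|f(i)|$); it is in fact translation discrete. So Milnor--Wolf plus ``spectral radius greater than one on an abelian section'' is not enough: you must first use finite vcd (torsion-freeness together with finite Hirsch length, and the non-divisibility observation above) to land in the polycyclic/minimax regime before an eigenvalue argument produces a relation of Baumslag--Solitar type $ta^kt^{-1}=w$ forcing $\tau(a)=0$; and even then, an infinite-order automorphism with all eigenvalues on the unit circle is quasi-unipotent and must be folded back into the nilpotent-distortion case. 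As written, the proof does not go through.
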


Making use of the above theorem we extend the above-mentioned result for $\textrm{Out}(F_n)$ to the outer automorphism groups of finitely generated free-by-finite groups.

\begin{restatable*}{theorem}{firstThmTwo}\label{thm2}
  Let $G$ be a finitely generated free-by-finite group. Then  every subgroup of $\textrm{Out}(G)$  (finitely generated or not) either contains a free subgroup of rank two or is virtually $\mathbb{Z}^m$, where $m\leq vcd[ \textrm{Out}(G)]$. 
\end{restatable*}

\textbf{Acknowledgements}: The authors would like to thank the referee for many helpful suggestions which improved the exposition of the paper.

\section{Preliminaries}

We start by recalling the definition and some basic facts about translation lengths 
(see for example \cite{GS}).
Let $G$ be a finitely generated group and $X$ a finite generating set for $G$. For each
$g\in G$, we denote by $\|g\|_X$ the word length of $g$ with respect to $X$, i.e. the minimum integer $n$ such that $g$ can be expressed as a product $g=s_{1}\cdots s_{n}$, where $s_{i}\in X^{\pm 1}$ for each 
$i=1,\ldots,n$. We will write $\|g\|_G$ (resp. $\|g\|_X$) instead of $\|g\|_X$ if $X$ 
(resp. $G$) is well understood from context.
\begin{definition}
Let $G$ be a finitely generated group and $X$ a finite generating set for $G$.
The \emph{translation length}  with respect to $X$ of an element $g\in G$, denoted $\tau_{\scriptscriptstyle{G,X}}(g)$, is
the limit
\[
\tau_{\scriptscriptstyle{G,X}}(g)=\lim_{n\to \infty}\dfrac{\|g^n\|_{X}}{n}.
\]
\end{definition}

The existence of the above limit follows by the subadditivity of $\|\cdot\|_X$, that is 
$\|gh\|_X\leq \|g\|_X +\|h\|_X$ for all $g,h\in G$. We will write $\tau_{\scriptscriptstyle{G}}(g)$ instead of $\tau_{\scriptscriptstyle{G,X}}(g)$ when the generating set $X$ is understood.\par

In the following lemma we collect some elementary properties of translation numbers (we refer to \cite{GS} for proofs).
 \begin{lemma}
 Let $G$ be a finitely generated group and $X$ a finite generating set for $G$.
 \begin{enumerate}
     \item If $g\in G$ is an element of finite order, then $\tau_{\scriptscriptstyle{G,X}}(g)=0$. 
     \item $\tau_{\scriptscriptstyle{G,X}}(g)=\tau_{\scriptscriptstyle{G,X}}(xgx^{-1})$ for all $g,x\in G$.
     \item $\tau_{\scriptscriptstyle{G,X}}(g^n)=|n|\tau_{\scriptscriptstyle{G,X}}(g)$  for all $g\in G$ and $n\in\mathbb{Z}$.
     \item If $x,y\in G$ are commuting elements, then $\tau_{\scriptscriptstyle{G,X}}(xy)\leq \tau_{\scriptscriptstyle{G,X}}(x)+\tau_{\scriptscriptstyle{G,X}}(y)$.
     \item $\tau_{\scriptscriptstyle{G,X}}(g)=\tau_{\scriptscriptstyle{G,X}}(g^{-1})$ for all $g\in G$.
     \item Let $G,H$ be two finitely generated groups and $X$ a finite generating set for $G$. If $f: G\to H$ is an epimorphism and $Y$ is the image of $X$ under $f$, then for all $g\in G$ we have $\tau_{\scriptscriptstyle{G,X}}(g)\ge \tau_{\scriptscriptstyle{H,Y}}(f(g))$.
 \end{enumerate}
 \end{lemma}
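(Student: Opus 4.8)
The plan is to derive all six statements directly from the definition $\tau_{G,X}(g)=\lim_{n\to\infty}\|g^n\|_X/n$, whose existence is already guaranteed by subadditivity, together with one elementary observation: since the alphabet $X^{\pm1}$ is symmetric, writing $h=s_1\cdots s_k$ with $s_i\in X^{\pm1}$ gives $h^{-1}=s_k^{-1}\cdots s_1^{-1}$ with each $s_i^{-1}\in X^{\pm1}$, whence $\|h^{-1}\|_X=\|h\|_X$ for every $h\in G$. Applying this to $h=g^n$ and using $(g^{-1})^n=(g^n)^{-1}$ yields $\|(g^{-1})^n\|_X=\|g^n\|_X$ for all $n$, and dividing by $n$ and passing to the limit proves $(5)$. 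For $(1)$, if $g$ has finite order $k$ then $g^n=g^{\,n\bmod k}$, so $\|g^n\|_X$ is bounded above by $M:=\max_{0\le r<k}\|g^r\|_X$; hence $\|g^n\|_X/n\le M/n\to 0$ and $\tau_{G,X}(g)=0$.

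For $(2)$ I would use the identity $(xgx^{-1})^n=xg^nx^{-1}$ together with subadditivity to get $\|(xgx^{-1})^n\|_X\le \|g^n\|_X+2\|x\|_X$ (invoking $\|x^{-1}\|_X=\|x\|_X$ from the observation above); dividing by $n$ and letting $n\to\infty$ gives $\tau_{G,X}(xgx^{-1})\le\tau_{G,X}(g)$, and the reverse inequality follows by replacing the pair $(g,x)$ with $(xgx^{-1},x^{-1})$. Statement $(3)$ for $n\ge 0$ follows from evaluating the limit along the subsequence $\{nm\}_m$: since the full limit $\tau_{G,X}(g)$ exists, $\tau_{G,X}(g^n)=\lim_{m\to\infty}\|g^{nm}\|_X/m=n\lim_{m\to\infty}\|g^{nm}\|_X/(nm)=n\,\tau_{G,X}(g)$; the case $n<0$ reduces to this via $(5)$, and $n=0$ is covered by $(1)$. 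For $(4)$, commutativity of $x$ and $y$ gives $(xy)^n=x^ny^n$, so $\|(xy)^n\|_X\le\|x^n\|_X+\|y^n\|_X$, and dividing by $n$ and taking limits yields $\tau_{G,X}(xy)\le\tau_{G,X}(x)+\tau_{G,X}(y)$.

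Finally, for $(6)$ I would first note that $Y=f(X)$ generates $H$ because $f$ is surjective and $X$ generates $G$, and that $f(X^{\pm1})=Y^{\pm1}$ since $f$ is a homomorphism. Thus any expression $g=s_1\cdots s_k$ with $s_i\in X^{\pm1}$ maps to $f(g)=f(s_1)\cdots f(s_k)$ with $f(s_i)\in Y^{\pm1}$, so $\|f(g)\|_Y\le\|g\|_X$ for every $g\in G$; applying this to $g^n$ and using $f(g^n)=f(g)^n$, then dividing by $n$ and taking limits, gives $\tau_{H,Y}(f(g))\le\tau_{G,X}(g)$. None of these steps presents a genuine obstacle; the only point requiring a little care is the subsequence argument in $(3)$, which is legitimate precisely because the defining limit is already known to exist, so every subsequence of $\|g^m\|_X/m$ converges to the same value $\tau_{G,X}(g)$.
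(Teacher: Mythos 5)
Your proposal is correct and complete; note that the paper itself gives no proof of this lemma, simply deferring to the Gersten--Short reference \cite{GS}, and your direct verifications from the definition (the symmetric-alphabet argument for $\|h^{-1}\|_X=\|h\|_X$, the conjugation bound $\|xg^nx^{-1}\|_X\le \|g^n\|_X+2\|x\|_X$, the subsequence argument for $\tau(g^n)=n\tau(g)$ justified by the existence of the full limit, and the length-nonincreasing property of epimorphisms) are exactly the standard ones. The only point worth a remark is in part (6): if some generator $s_i\in X^{\pm1}$ maps to the identity of $H$, the image word may contain trivial letters, but deleting them only shortens the expression, so the inequality $\|f(g)\|_Y\le\|g\|_X$ you use still holds.
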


\begin{definition}
A finitely generated group $G$ is called \emph{translation discrete} if for some generating set $X$ of $G$ the translation numbers of the non-torsion elements are bounded away from zero, i.e. there is a positive constant $M$ such that $\tau_{\scriptscriptstyle{G}}(g)\ge M$ for all elements $g\in G$ of infinite order.
\end{definition}

The following lemma says that homomorphisms which are
also quasi-isometries can only change translation numbers by multiplicative
constants so that zero translation numbers and translation discreteness
are preserved. The proof is left as an exercise to the reader.

\begin{lemma}\label{LemHomQi}
Let $G,H$ be two finitely generated groups, let $X,Y$ be finite generating sets for $G$ and $H$, respectively, and let $f:G\to H$ be a homomorphism. If $f$ is a quasi-isometry, then there is a positive number $\lambda$ such that 
\[
\frac{1}{\lambda}\tau_{\scriptscriptstyle{G,X}}(g)\leq \tau_{\scriptscriptstyle{H,Y}}(f(g))\leq \lambda\tau_{\scriptscriptstyle{G,X}}(g), \text{ for all } g\in G.
\]
In particular:
\begin{enumerate}
\item The  translation discreteness of $G$ does not depend on the generating set $X$.
\item $G$ is translation discrete if and only if each subgroup of finite index in $G$ is translation discrete.
\item If $K$ is a finite normal subgroup of $G$, then $G$ is translation discrete if and only if $G/K$ is translation discrete.
\end{enumerate}
\end{lemma}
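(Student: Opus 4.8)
The plan is to derive the displayed two-sided inequality directly from the definition of a quasi-isometry and then read off the three consequences as special cases. Recall that $f$ being a quasi-isometry means there are constants $\lambda\geq 1$ and $C\geq 0$ with
\[
\frac{1}{\lambda}\|g\|_X - C \leq \|f(g)\|_Y \leq \lambda\|g\|_X + C \quad\text{for all } g\in G
\]
(I use the symbol $\lambda$ since this is exactly the constant appearing in the statement). First I would apply this estimate not to $g$ but to each power $g^n$: because $f$ is a homomorphism, $f(g^n)=f(g)^n$, so
\[
\frac{1}{\lambda}\|g^n\|_X - C \leq \|f(g)^n\|_Y \leq \lambda\|g^n\|_X + C.
\]
Dividing by $n$ and letting $n\to\infty$, the additive constant contributes $C/n\to 0$ and disappears, while the remaining terms converge to $\tau_{\scriptscriptstyle{G,X}}(g)$ and $\tau_{\scriptscriptstyle{H,Y}}(f(g))$ by definition. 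This yields $\frac{1}{\lambda}\tau_{\scriptscriptstyle{G,X}}(g)\le\tau_{\scriptscriptstyle{H,Y}}(f(g))\le\lambda\tau_{\scriptscriptstyle{G,X}}(g)$, the asserted inequality.

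For the three consequences I would, in each case, exhibit a homomorphism which is a quasi-isometry and then compare infinite-order elements. For (1), if $X$ and $X'$ are two finite generating sets, the identity $\mathrm{id}\colon(G,X)\to(G,X')$ is a homomorphism and a (bi-Lipschitz) quasi-isometry, so $\tau_{\scriptscriptstyle{G,X}}$ and $\tau_{\scriptscriptstyle{G,X'}}$ differ only by the multiplicative constants $\lambda^{\pm1}$; hence one is bounded away from $0$ on infinite-order elements exactly when the other is. For (2), a finite-index subgroup $H\le G$ is finitely generated and the inclusion $\iota\colon H\hookrightarrow G$ is a homomorphism and a quasi-isometry. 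If $G$ is translation discrete, then each infinite-order $h\in H$ is an infinite-order element of $G$, so $\tau_H(h)\ge\frac{1}{\lambda}\tau_G(h)\ge M/\lambda>0$. Conversely, if $H$ is translation discrete, I would use that $g^{[G:H]!}\in H$ for every $g\in G$ (the permutation of $G/H$ induced by $g$ has order dividing $[G:H]!$); combining $\tau_G(g)=\tfrac{1}{[G:H]!}\tau_G\!\left(g^{[G:H]!}\right)$ from part $3$ of the elementary lemma with the inequality for $\iota$ gives a uniform positive lower bound for infinite-order $g$.

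For (3), the quotient map $\pi\colon G\to G/K$ by a finite normal subgroup $K$ is a homomorphism and a quasi-isometry, and the essential observation is that $\pi$ preserves infinite-order status in both directions: if $g$ has infinite order then $\pi(g)$ cannot have finite order $m$, for then $g^m\in K$ would be torsion and force $g$ to be torsion; conversely any infinite-order element of $G/K$ lifts to an infinite-order element of $G$. With this correspondence the two-sided inequality transfers the lower bound $M$ each way.

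The routine inequality itself presents no real obstacle; the only points requiring care are the torsion bookkeeping in (2) and (3) — verifying that infinite-order elements on one side correspond to infinite-order elements on the other, and in (2) choosing the exponent $[G:H]!$ uniformly in $g$ so that the lower bound does not degenerate. Verifying that the three maps are genuinely quasi-isometries (change of finite generating set, inclusion of a finite-index subgroup, and quotient by a finite normal subgroup) is standard, so I would simply cite these facts.
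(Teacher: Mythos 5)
The paper leaves this lemma's proof as an exercise, so there is no in-paper argument to compare against; your proposal is correct and is exactly the standard intended argument (apply the quasi-isometry inequality to $g^n$, divide by $n$, let the additive constant vanish in the limit, then specialize to the three maps, with the needed torsion bookkeeping for the finite-index and finite-quotient cases). No gaps.
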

It is easy to see that a homomorphism $f:G\to H$ as in the above lemma is a quasi-isometry if and only if $\ker f$ is finite and $\textrm{Im} f$ is of finite index in $H$.
\begin{lemma}[\cite{Con98} Lemma 2.6.1]\label{LemSubdiscrete}
Let $G$ be a finitely generated group and $X$ a finite generating set. The following are equivalent:
\begin{enumerate}
    \item $G$ is translation discrete.
    \item Every finitely generated subgroup of $G$ is translation discrete.
\end{enumerate}
\end{lemma}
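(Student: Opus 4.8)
The plan is to observe that the equivalence is almost entirely one-directional in difficulty. For $(2)\Rightarrow(1)$ I would simply note that $G$, being finitely generated by hypothesis, is a finitely generated subgroup of itself; hence if every finitely generated subgroup of $G$ is translation discrete, then so is $G$. All the substance lies in $(1)\Rightarrow(2)$.

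For that direction, fix a finitely generated subgroup $H\leq G$ and choose any finite generating set $Y$ for $H$. The key is to compare the word metric of $H$ with respect to $Y$ against the restriction to $H$ of the word metric of $G$ with respect to $X$. Setting $C=\max_{y\in Y}\|y\|_{\scriptscriptstyle X}$, any $h\in H$ expressed as a $Y$-word of length $\|h\|_{\scriptscriptstyle Y}$ becomes, after substituting each $y\in Y^{\pm1}$ by a shortest $X$-word representing it, an $X$-word of length at most $C\|h\|_{\scriptscriptstyle Y}$; thus $\|h\|_{\scriptscriptstyle X}\leq C\|h\|_{\scriptscriptstyle Y}$ for every $h\in H$.

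I would then apply this inequality to the powers $h^{n}$ and pass to the limit. Dividing $\|h^{n}\|_{\scriptscriptstyle X}\leq C\|h^{n}\|_{\scriptscriptstyle Y}$ by $n$ and letting $n\to\infty$ gives $\tau_{\scriptscriptstyle{G,X}}(h)\leq C\,\tau_{\scriptscriptstyle{H,Y}}(h)$, equivalently $\tau_{\scriptscriptstyle{H,Y}}(h)\geq \tfrac{1}{C}\,\tau_{\scriptscriptstyle{G,X}}(h)$. Now suppose $h$ has infinite order in $H$. Since $H\leq G$, the element $h$ has infinite order in $G$ as well, so the translation discreteness of $G$ supplies a constant $M>0$ with $\tau_{\scriptscriptstyle{G,X}}(h)\geq M$. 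Combining the two estimates yields $\tau_{\scriptscriptstyle{H,Y}}(h)\geq M/C>0$, uniformly over all infinite-order $h\in H$, which is precisely the assertion that $H$ (with the generating set $Y$) is translation discrete.

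The only point that requires care — and really the sole ``obstacle'' — is the direction of the comparison inequality. Because the subgroup metric dominates the restricted ambient metric, translation in $H$ can only be longer than translation in $G$, so a uniform lower bound downstairs transfers to a uniform lower bound upstairs. One does not need a two-sided quasi-isometric comparison here (indeed $H$ need not be quasi-isometrically embedded in $G$); the single inequality $\|\cdot\|_{\scriptscriptstyle X}\leq C\|\cdot\|_{\scriptscriptstyle Y}$ on $H$ is all that is used.
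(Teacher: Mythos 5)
Your proof is correct. Note that the paper does not actually prove this lemma --- it is imported from Conner \cite{Con98}, Lemma 2.6.1 --- but your argument is the standard one and is essentially what that source does: the direction $(2)\Rightarrow(1)$ is immediate since $G$ is a finitely generated subgroup of itself, and for $(1)\Rightarrow(2)$ the one-sided comparison $\|h\|_X\leq C\|h\|_Y$ with $C=\max_{y\in Y}\|y\|_X$ gives $\tau_{\scriptscriptstyle{H,Y}}(h)\geq \tau_{\scriptscriptstyle{G,X}}(h)/C\geq M/C$ for every infinite-order $h\in H$. Your closing observation --- that only the single inequality is needed and no quasi-isometric embedding of $H$ in $G$ is required, in contrast to the two-sided hypothesis of Lemma \ref{LemHomQi} --- correctly identifies the one point where the direction of the estimate matters.
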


\section{The main theorems}\label{Free-By-Finite}
Throughout this section, we assume that $\mathcal{P}$ is a group theoretic property satisfying:
\begin{enumerate}
\item $\mathcal{P}$ is subgroup closed.
\item If $H$ is a subgroup of finite index in a group $G$ and $H$ has $\mathcal{P}$, then $G$ has $\mathcal{P}$.
\item If $G$ has  $\mathcal{P}$ and $N$ is a finite normal subgroup of $G$, then $G/N$ has $\mathcal{P}$.
\end{enumerate}

Examples of such properties are: the property of being virtually torsion free, the property of having finite virtual cohomological dimension, and the property of being translation discrete for finitely generated groups (Lemmas \ref{LemHomQi} and \ref{LemSubdiscrete}).

\begin{definition}
 Let $G$ be a group.  We say that: 
\begin{enumerate}
    \item  $G$ satisfies the \emph{Tits alternative} if every subgroup of $G$ (finitely generated or not) is either virtually solvable, or contains a free subgroup of rank $2$.
    \item  $G$ satisfies the \emph{strong Tits alternative} if every subgroup of $G$  (finitely generated or not) is either virtually abelian, or contains a rank two free subgroup.
\end{enumerate}
 \end{definition}

\begin{lemma}\label{Lem4}
The property of satisfying the Tits alternative (resp. the strong) is a group theoretic property $\mathcal{P}$ as above.
 \end{lemma}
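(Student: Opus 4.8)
The plan is to verify the three closure axioms directly for $\mathcal{P}$ taken to be the Tits alternative (resp. the strong Tits alternative), handling both versions in parallel by letting $\mathcal{V}$ denote the class of virtually solvable groups (resp. virtually abelian groups). The only facts about $\mathcal{V}$ that I will use repeatedly are entirely standard: $\mathcal{V}$ is closed under passing to subgroups, to finite-index overgroups, and to quotients. Concretely, if $S$ is a finite-index solvable (resp. abelian) subgroup of a group $K$ and $K' \leq K$, then $S \cap K'$ is finite-index and solvable (resp. abelian) in $K'$; if $K$ has finite index in a larger group $L$, then $S$ itself is finite-index and solvable (resp. abelian) in $L$; and the image of $S$ under any epimorphism is solvable (resp. abelian) of finite index in the image of $K$.

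For axiom (1), suppose $G$ satisfies the alternative and let $K \leq G$. Any subgroup $L \leq K$ is also a subgroup of $G$, so $L \in \mathcal{V}$ or $L$ contains a free subgroup of rank two; hence $K$ satisfies the alternative. For axiom (2), suppose $H \leq G$ has finite index and satisfies the alternative. Given any $L \leq G$, the subgroup $L \cap H$ has finite index in $L$ and is contained in $H$, so by axiom (1) applied to $H$ it either contains a rank-two free subgroup, in which case so does $L$, or it lies in $\mathcal{V}$, in which case a finite-index solvable (resp. abelian) subgroup of $L \cap H$ is also finite-index in $L$, giving $L \in \mathcal{V}$. Thus $G$ satisfies the alternative.

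For axiom (3), let $G$ satisfy the alternative, let $N \trianglelefteq G$ be finite, and write $\pi \colon G \to G/N$ for the quotient map. Given $\bar{L} \leq G/N$, put $L = \pi^{-1}(\bar{L})$, so that $N \leq L$ and $\bar{L} = L/N$. Applying the alternative to $L$: if $L \in \mathcal{V}$, then its quotient $\bar{L} = L/N$ also lies in $\mathcal{V}$ by the closure of $\mathcal{V}$ under quotients. If instead $L$ contains a free subgroup $F$ of rank two, then since $N$ is finite the intersection $F \cap N$ is a finite subgroup of the torsion-free group $F$, hence trivial, so $\pi$ restricts to an isomorphism of $F$ onto $\pi(F) \leq \bar{L}$, and $\bar{L}$ contains a rank-two free subgroup. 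Either way $\bar{L}$ satisfies the dichotomy, so $G/N$ satisfies the alternative.

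All three verifications are routine bookkeeping; the single point where anything beyond formal manipulation is used is the last step of axiom (3), where I invoke the torsion-freeness of free groups to ensure that a rank-two free subgroup maps isomorphically under the quotient by the finite normal subgroup $N$. I therefore expect that step to be the only one worth spelling out, the rest being immediate from the closure properties of $\mathcal{V}$.
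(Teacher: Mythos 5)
Your proof is correct and follows essentially the same route as the paper: the first two closure properties are routine, and for the quotient by a finite normal subgroup you use exactly the paper's two observations, namely that a rank-two free subgroup meets the finite kernel trivially (being torsion-free) and that the image of a finite-index solvable (resp.\ abelian) subgroup remains solvable (resp.\ abelian) of finite index in the quotient. No issues.
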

 \begin{proof}
 It is clear that the class of groups satisfying the Tits alternative (resp. the strong) is closed under taking subgroups and under finite index supergroups. 
Hence,  it will suffice to show that if $G$ satisfies the Tits alternative (resp. the strong) and $K$ is a finite normal subgroup of $G$, then $G/K$ satisfies the Tits alternative (resp. the strong), too. \par
Each subgroup $\overline{H}$ of  $G/K$ has the form $H/K$, where $H$ is a subgroup of $G$ containing $K$. If $H$ contains a free subgroup $F$ of rank $2$, then, since $K$ is finite, $F$ embeds in the quotient group $\overline{H}=H/K$ and thus $\overline{H}$ contains a free subgroup of rank $2$. If $H$ is virtually solvable (resp. abelian), then it contains a solvable (resp. abelian) subgroup $Q$ of finite index and the quotient group  $QK/K$ is solvable (resp. abelian) being isomorphic to $ Q/(Q\cap K)$. Also, it is of finite index in $H/K$, since $$\left|\frac{H}{K}:\frac{QK}{K}\right|=|H:QK|\leq |H:Q|<\infty.$$ 
  \end{proof}

\begin{lemma}\label{Lem3}
Let $G$ be a finitely generated group, $N$ a normal subgroup of $G$ of finite index with trivial center and $\mathcal{P}$ a group theoretic property as above. 
If $\textrm{Out}(N)$ satisfies $\mathcal{P}$, then so does $\textrm{Out}(G)$. In particular, if $\textrm{Out}(N)$ satisfies the Tits alternative (resp. the strong), then $\textrm{Out}(G)$ satisfies the Tits alternative (resp. the strong), too.
 \end{lemma}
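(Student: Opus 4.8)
The plan is to compare $\textrm{Out}(G)$ with $\textrm{Out}(N)$ by restricting automorphisms to $N$, after first cutting $\textrm{Aut}(G)$ down to the automorphisms that preserve $N$ (which is needed since $N$ is only assumed normal, not characteristic). So I would set $A=\{\alpha\in\textrm{Aut}(G):\alpha(N)=N\}$. Because $G$ is finitely generated it has only finitely many subgroups of index $[G:N]$, and $\textrm{Aut}(G)$ permutes them, so $A$ has finite index in $\textrm{Aut}(G)$; moreover $\textrm{Inn}(G)\le A$ as $N\trianglelefteq G$. Hence $\textrm{Out}_N(G):=A/\textrm{Inn}(G)$ has finite index in $\textrm{Out}(G)$, and by property~(2) it suffices to prove that $\textrm{Out}_N(G)$ has $\mathcal P$. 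I would also record the consequences of $Z(N)=1$: the conjugation map $G\to\textrm{Aut}(N)$ has kernel $C_G(N)$, and since $C_G(N)\cap N=Z(N)=1$ this kernel embeds in the finite group $G/N$, so $C_G(N)$ is finite.

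Next I would restrict to $N$. Composing the restriction homomorphism $A\to\textrm{Aut}(N)$, $\alpha\mapsto\alpha|_N$, with the quotient $\textrm{Aut}(N)\to\textrm{Out}(N)$ gives a homomorphism $\Phi:A\to\textrm{Out}(N)$, whose image $I=\textrm{Im}\,\Phi$ is a subgroup of $\textrm{Out}(N)$ and hence has $\mathcal P$ by property~(1). The image $\Phi(\textrm{Inn}(G))$ is the image in $\textrm{Out}(N)$ of the conjugation action of $G$ on $N$: it equals $\widetilde{G}/\textrm{Inn}(N)$, where $\widetilde{G}$ is the image of $G$ in $\textrm{Aut}(N)$, and since $\textrm{Inn}(N)\le\widetilde{G}$ with $[\widetilde{G}:\textrm{Inn}(N)]\le[G:N]$ it is a \emph{finite} normal subgroup of $I$. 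Writing $\overline{\widetilde{G}}=\Phi(\textrm{Inn}(G))$, property~(3) gives that $I/\overline{\widetilde{G}}$ has $\mathcal P$, and $\Phi$ descends to $\overline{\Theta}:\textrm{Out}_N(G)\to I/\overline{\widetilde{G}}$.

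It remains to control $\ker\overline{\Theta}$, and I would show it is finite by analysing $K_0=\{\alpha\in A:\alpha|_N=\textrm{id}_N\}$. For such an $\alpha$ and any $g\in G$, comparing $\alpha(gng^{-1})=gng^{-1}$ with $\alpha(g)\,n\,\alpha(g)^{-1}$ forces $g^{-1}\alpha(g)\in C_G(N)$ for all $g$, so $\alpha$ is determined by a map from a finite generating set of $G$ into the finite group $C_G(N)$; hence $K_0$ is finite. A short computation then identifies the preimage $\Phi^{-1}(\overline{\widetilde{G}})$ with $\textrm{Inn}(G)\,K_0$, so $\ker\overline{\Theta}\cong K_0/(K_0\cap\textrm{Inn}(G))$ is finite. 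Thus $\textrm{Out}_N(G)$ sits in an extension with finite kernel $\ker\overline{\Theta}$ and quotient $\textrm{Im}\,\overline{\Theta}\le I/\overline{\widetilde{G}}$, which has $\mathcal P$.

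The main obstacle is precisely this last step: passing from a finite normal subgroup with $\mathcal P$-quotient back up to the whole group, i.e. the converse of property~(3). When $C_G(N)=1$ there is nothing to do, for then $K_0=1$, $\overline{\Theta}$ is injective, and $\textrm{Out}_N(G)$ embeds in the $\mathcal P$-group $I/\overline{\widetilde{G}}$, whence $\textrm{Out}(G)$ has $\mathcal P$ by properties~(1) and~(2). In general the finite kernel $\ker\overline{\Theta}$ genuinely appears, and I would close the argument using that for the properties at hand a finite normal subgroup with $\mathcal P$-quotient forces $\mathcal P$: for translation discreteness this is the ``if'' direction of Lemma~\ref{LemHomQi}(3), and for the (strong) Tits alternative it follows from the same bookkeeping as in Lemma~\ref{Lem4} (a nonabelian free quotient lifts through a finite kernel, and a finite-by-(virtually abelian) group is virtually abelian). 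The ``In particular'' statement then follows immediately.
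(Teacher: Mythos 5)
Your construction runs parallel to the paper's up to one decisive choice, and that choice is where the gap lies. The paper does not work with the full setwise stabilizer $A=\{\alpha\in\mathrm{Aut}(G):\alpha(N)=N\}$ but with the smaller (still finite-index) subgroup $\mathrm{Aut}_N(G)$ of automorphisms that preserve $N$ \emph{and induce the identity on $G/N$}. For such an $\alpha$ with $\alpha|_N=\mathrm{id}_N$ one gets $g^{-1}\alpha(g)\in C_G(N)\cap N=Z(N)=1$, so the restriction map to $\mathrm{Aut}(N)$ is \emph{injective}; consequently $\Gamma=\mathrm{Aut}_N(G)/\mathrm{Inn}_N(G)$ embeds in $\mathrm{Out}(N)$, and $\mathrm{Out}(G)$ is then reached from $\Gamma$ by a quotient by a finite normal subgroup followed by a finite-index overgroup --- exactly the closure properties (1)--(3) that $\mathcal P$ is assumed to have, used only in the permitted directions. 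With your larger $A$, the element $g^{-1}\alpha(g)$ only lands in the finite group $C_G(N)$, so the kernel $K_0$ is finite but in general nontrivial, and you are forced to pass \emph{up} through the finite normal subgroup $\ker\overline{\Theta}$. That step is the converse of property (3), which is not among the hypotheses on $\mathcal P$, so your argument does not prove the lemma as stated for a general $\mathcal P$.

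Moreover, the patch you propose for the strong Tits alternative rests on a false claim: a finite-by-(virtually abelian) group need not be virtually abelian when it is not finitely generated. An infinite extraspecial $2$-group $E$ (the central product of infinitely many copies of $D_8$) has $[E,E]=Z(E)\cong\mathbb{Z}/2$ with $E/Z(E)$ infinite elementary abelian, yet $E$ is neither virtually abelian nor contains a free subgroup of rank two; so the strong Tits alternative is not closed under extensions with finite kernel, and since the alternative here quantifies over all subgroups, finitely generated or not, you cannot appeal to finite generation to rescue the argument. (Your patch does work for translation discreteness, via the ``if'' direction of Lemma~\ref{LemHomQi}(3), and could be made to work for the ordinary Tits alternative, since finite-by-(virtually solvable) groups are virtually solvable; but Lemma~\ref{Lem4} only gives the ``going down'' direction you would need reversed.) The repair is exactly the paper's move: shrink $A$ to $\mathrm{Aut}_N(G)$ before restricting to $N$, which kills $K_0$, makes $\overline{\Theta}$ injective on the relevant quotient, and removes any need for a converse to (3).
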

 \begin{proof}
 Let $\textrm{Aut}_N(G)$ denote the subgroup of $\textrm{Aut}(G)$ consisting of those automorphisms of $G$ which fix $N$ and induce the identity on $G/N$. The hypothesis that $N$ has trivial center implies
that the restriction map $\phi:\textrm{Aut}_N(G)\to \textrm{Aut}(N)$ is an injection and hence $\phi(\textrm{Inn}_N(G))=\textrm{Inn}(N)$, where $\textrm{Inn}_N(G)$ denotes the normal subgroup of $\textrm{Inn}(G)$ consisting of all inner automorphisms of $G$ induced by elements of $N$. It follows that the quotient group $\Gamma=\textrm{Aut}_N(G)/\textrm{Inn}_N(G)$ embeds into $\textrm{Out}(N)$ and therefore $\Gamma$ satisfies $\mathcal{P}$ . \par
Since $G$ is finitely generated, there are finitely many subgroups of index $[G:N]$ and thus the set $X=\{\varphi(N), \varphi \in \textrm{Aut}(G) \}$ is finite. 
If $K$ is the kernel of the natural action of  $\textrm{Aut}(G)$ on $X$, then the index $[\textrm{Aut}(G):K]$ is finite and each automorphism $\varphi \in K$ induces an automorphism 
$\tilde{\varphi}:G/N\rightarrow G/N$. So we obtain a homomorphism $\psi: K\rightarrow \textrm{Aut}(G/N)$, whose kernel is precisely  $\textrm{Aut}_N(G)$. Since $\textrm{Aut}(G/N)$ is finite, it follows that $\textrm{Aut}_N(G)$ is of finite index in $K$ and hence in $ \textrm{Aut}(G)$.\par
The restriction of the natural projection $\pi:\textrm{Aut}(G)\to \textrm{Out}(G)$ to $\textrm{Aut}_N(G)$ induces a homomorphism $\Bar{\pi}:\Gamma \to \textrm{Out}(G)$. The kernel of $\Bar{\pi}$, which is equal to $\textrm{Inn}(G)/\textrm{Inn}_N(G)$, is finite because $N$ is of finite index in $G$.  It follows that the image $\textrm{Im}\Bar{\pi}$ satisfies $\mathcal{P}$. But  $\textrm{Im}\Bar{\pi}$ has finite index in $\textrm{Out}(G)$, since $\textrm{Aut}_N(G)$ is of finite index in $\textrm{Aut}(G)$, which implies that $\textrm{Out}(G)$ satisfies $\mathcal{P}$. The second assertion follows from Lemma \ref{Lem4}.
\end{proof}

\begin{remark}\label{fgcase} From the above proof it also follows that $\Gamma$ is finitely generated if and only if $\textrm{Out}(G)$ is.
Thus, in the case where $\mathcal{P}$ is defined only for finitely generated groups, the previous lemma still holds if we further assume that $\textrm{Out}(G)$ is finitely generated.
\end{remark}

 \firstThmOne
  \begin{proof}
  Since $G$ is a finitely generated, free-by-finite group, it contains a finitely generated normal free subgroup $F$ of finite index. 
  If the rank of $F$ is $1$, then  $\textrm{Out}(G)$ is finite and trivially satisfies the conclusion.\par
  If the rank of $F$ is greater than $1$, then, since $\textrm{Out}(G)$ is finitely generated \cite{Kr} and $\textrm{Out}(F)$ is translation discrete \cite{Al}, it follows from Lemma \ref{Lem3} and Remark \ref{fgcase} that $\textrm{Out}(G)$ is also translation discrete. Appealing to Lemma \ref{Lem3} completes the proof.
  \end{proof}
   
  It is well known that the outer automorphism group of a finitely generated free-by-finite group has finite virtual cohomological dimension (see for example \cite{Kal}, \cite{mCool}). Furthermore Conner proved  \cite[Theorem 3.4]{Con00}  that every solvable group of finite virtual cohomological dimension $m$ in a translation discrete group is virtually $\mathbb{Z}^m$ (see also \cite{Bes}). 
Combining these with Theorem \ref{ThmOne} we get the following: 

\firstThmTwo

If $G$ is a finitely generated free-by-finite group, then Lemma \ref{Lem3} can also be used to show that each subgroup of $\textrm{Out}(G)$ either contains a free group of rank 2 or is virtually finitely generated abelian. However, translation discreteness provides an upper bound for the torsion free rank of every abelian subgroup of $\textrm{Out}(G)$.\par

 The Tits alternative for outer automorphism groups of free products has been studied by C. Horbez in \cite{Ho}. Krsti\'{c} and Vogtmann \cite{KV} computed the virtual cohomological dimension of the outer automorphism group of a finitely generated group which is a free product of a free group and finite groups. As a corollary of the above theorem and \cite[Proposition 10.1]{KV}, we obtain:
  \begin{corollary}
  Let $G=G_1\ast\cdots\ast G_k\ast F_p$, where each free factor $G_i$ is a finite group and $F_p$ is a free group of rank $p$. Then every subgroup of $\textrm{Out}(G)$ either contains a free group of rank two or is virtually  $\mathbb{Z}^m$, where $m\leq 2p+k-2$.
  \end{corollary}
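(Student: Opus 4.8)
The plan is to obtain the corollary as a direct application of Theorem~\ref{thm2}, once the virtual cohomological dimension of $\textrm{Out}(G)$ has been bounded by the quantity $2p+k-2$. Thus the argument splits into two essentially independent tasks: checking that $G$ meets the hypotheses of Theorem~\ref{thm2}, and quoting the relevant dimension count from \cite{KV}.

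First I would verify that $G=G_1\ast\cdots\ast G_k\ast F_p$ is a finitely generated free-by-finite group. Finite generation is immediate, since a free product of finitely many finitely generated groups (here the finite groups $G_i$ together with $F_p$) is finitely generated. To see that $G$ is free-by-finite, I would realize $G$ as the fundamental group of a finite graph of groups with finite vertex groups and trivial edge groups: take a central vertex carrying the trivial group, join it by $k$ edges to vertices carrying $G_1,\ldots,G_k$, and attach $p$ loops at the central vertex. The fundamental group of this graph of groups is exactly $G_1\ast\cdots\ast G_k\ast F_p$, and by the Karrass--Pietrowski--Solitar characterization a finitely generated group that is the fundamental group of a finite graph of finite groups is virtually free. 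Since any finite-index subgroup of $G$ contains a finite-index normal subgroup and subgroups of free groups are free, $G$ is in fact free-by-finite.

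With the hypotheses in place, Theorem~\ref{thm2} applies verbatim: every subgroup of $\textrm{Out}(G)$ either contains a free subgroup of rank two or is virtually $\mathbb{Z}^m$ with $m\leq vcd[\textrm{Out}(G)]$. It then remains only to feed in the dimension bound. Invoking \cite[Proposition 10.1]{KV}, which computes the virtual cohomological dimension of the outer automorphism group of a free product of finite groups and a free group, one gets $vcd[\textrm{Out}(G)]\leq 2p+k-2$; substituting into the inequality $m\leq vcd[\textrm{Out}(G)]$ yields the asserted bound $m\leq 2p+k-2$.

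Because both ingredients are quoted results, I do not expect a substantive obstacle: the argument is essentially an assembly. The only points demanding care are confirming that the free-by-finite hypothesis is genuinely satisfied, and checking that \cite[Proposition 10.1]{KV} is being applied within its stated range of parameters. In the degenerate small cases (for instance $k=0$, where $G=F_p$ and the sharp value $2p-3$ is already known), the quantity $2p+k-2$ remains a valid upper bound for the dimension, so the stated conclusion continues to hold.
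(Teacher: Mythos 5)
Your proposal is correct and follows exactly the route the paper takes: the corollary is obtained by combining Theorem~\ref{thm2} with the computation of $vcd[\textrm{Out}(G)]$ in \cite[Proposition 10.1]{KV}, the only additional content being the (routine, and correctly handled) observation that a free product of finitely many finite groups and a finitely generated free group is finitely generated and virtually free. No substantive difference from the paper's argument.
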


\noindent
Department of Mathematics\\
National and Kapodistrian University of Athens\\
Panepistimioupolis, GR-157 84, Athens, Greece\\
{\it e-mail}: iopapav@math.uoa.gr\\
{\it e-mail}: msykiot@math.uoa.gr


\begin{thebibliography}{99}
\bibitem{Al} Alibegovic, E. (2002). Translation Lengths in $\textrm{Out}(F_n)$. \textit{Geom. Dedicata} 92:87--93. 
\bibitem{Bes} Bestvina, M. (1999). Non-positively curved aspects of Artin groups of finite type. \textit{Geom. Topol.} 3:269--302. DOI: 10.2140/gt.1999.3.269.
\bibitem{BFH00} Bestvina, M., Feighn, M., Handel, M. (2000). The Tits alternative for $\textrm{Out}(F_n)$ I:
Dynamics of exponentially-growing automorphisms.  \textit{Ann. of Math.} (2)
151:517--623. DOI: 10.2307/121043.
\bibitem{BFH04}  Bestvina, M., Feighn, M., Handel, M. (2004). Solvable Subgroups of $\textrm{Out}(F_n)$ are
Virtually Abelian. \textit{Geom. Dedicata} 104:71--96.
\bibitem{BFH05} Bestvina, M., Feighn, M., Handel, M. (2005). The Tits alternative for $\textrm{Out}(F_n)$ II: A
Kolchin type theorem.  \textit{Ann. of Math.} (2) 161:1--59.
\bibitem{CV} Charney,  R., Vogtmann, K. (2011). Subgroups and quotients of automorphism groups of RAAGs, Proc. Symp. Pure Math., vol. 82, pp. 9--27. DOI:10.1090/pspum/082.
\bibitem{Con98} Conner, G. (1998). Properties of translation numbers in nilpotent groups. \textit{Comm. Algebra} 26(4):1069--1080. DOI: 10.1080/00927879808826184. 
\bibitem{Con00} Conner, G. (2000). Discreteness properties of translation numbers in solvable groups.  \textit{J. Group Theory} 3(1): 77--94. DOI: 10.1515/jgth.2000.007.
\bibitem{GS} Gersten, S. M., Short, H. B. (1991) Rational subgroups of biautomatic groups. \textit{Ann. of Math.} (2) 134, no. 1: 125--158. DOI: 10.2307/2944334.
\bibitem{Gr} Gromov, M. (1987). Hyperbolic groups. \textit{Essays in group theory}, 75--263, Math. Sci. Res. Inst. Publ., 8, Springer, New York. DOI: 10.1007/978-1-4613-9586-7\_3.
\bibitem{Ho} Horbez, C. The Tits alternative for the automorphism group of a free product. Preprint arXiv: 1408.0546.
\bibitem{Iv} Ivanov, N. V.  (1984). Algebraic properties of the Teichmüller modular group. (Russian) Dokl. Akad. Nauk SSSR 275, no. 4:786--789.
\bibitem{Kal} Kalajd\v{z}ievski, S. (1992). Automorphism group of a free group: centralizers and stabilizers. \textit{J. Algebra} 150, no. 2:435--502. DOI: 10.1016/S0021-8693(05)80041-9.
\bibitem{Kr} Krsti\'{c}, S. (1992). Finitely generated virtually free groups have finitely presented automorphism group. Proc. London Math. Soc. (3) 64, no. 1:49--69. DOI: 10.1112/plms/s3-64.1.49.
\bibitem{KV} Krsti\'{c}, S., Vogtmann, K. (1993). Equivariant outer space and automorphisms of free-by-finite groups. Comment. Math. Helv. 68, no. 2:216--262. DOI: 10.1007/BF02565817.
\bibitem{McC}  McCarthy, J. (1985). A ``Tits-alternative" for subgroups of surface mapping class groups. \textit{Trans. Amer. Math. Soc.} 291, no. 2:583--612. DOI: 10.2307/2000100.
\bibitem{mCool} McCool, J. (1988). The automorphism groups of finite extensions of free groups. \textit{Bull. London Math. Soc.} 20, no. 2:131--135. DOI: 10.1112/blms/20.2.131.
\bibitem{SW} Sageev, M., Wise, D.T. (2005). The Tits Alternative for $\textrm{CAT}(0)$ Cubical Complexes, \textit{Bull. Lond. Math. Soc.} 37, no. 5:706--710.
\bibitem{Ti} Tits, J. (1972). Free subgroups in linear groups. \textit{J. Algebra} 20:250--270. DOI: 10.1016/0021-8693(72)90058-0.

\end{thebibliography}
\end{document}